\renewcommand{\epsilon}{\varepsilon}
\renewcommand{\phi}{\varphi}
\newtheorem{theorem}{Theorem}
\renewcommand\subsection{\@startsection{subsection}{2}%
  \z@{.5\linespacing\@plus.7\linespacing}{.1\linespacing}%
  {\normalfont\scshape}}
\title[On the geometric interpretation of A3w]{A remark on the geometric interpretation of the A3w condition from optimal transport}
\author{Cale Rankin}
\thanks{This research is supported by ARC DP 200101084 and the Fields Institute for Research in Mathematical Sciences.}
\address{The Fields Institute for Research in Mathematical Sciences}
\email{cale.rankin@utoronto.ca\\cale.rankin@gmail.com}
\begin{document}
\maketitle
\begin{abstract}
We provide a geometric interpretation of the well known A3w condition for regularity of optimal transport maps. 
\end{abstract}
\section{Introduction}
\label{sec:introduction}

In optimal transport a condition known as A3w is necessary for regularity of the optimal transport map. Here we provide a geometric interpretation of A3w.  We'll use freely the notation from \cite{MR2188047}. Let $c \in C^2(\mathbf{R}^n \times \mathbf{R}^n)$ satisfy A1 and A2 (see \S \ref{sec:proof-results}). Keeping in mind the prototypical case $c(x,y) = |x-y|^2$, we fix $x_0,y_0 \in \mathbf{R}^n$ and perform a linear transformation so $c_{xy}(x_0,y_0) = -I$. Define coordinates
\begin{align}
  \label{eq:q} q(x)&:= -c_y(x,y_0),\\
  \label{eq:p} p(y)&:= -c_x(x_0,y),
\end{align}
and the inverse transformations by $x(q),y(p)$. Write $c(q,p) = c(x(q),y(p))$ and let $q_0=q(x_0)$ and $p_0=p(y_0)$. 
We prove A3w is satisfied if and only if whenever these transformations are performed
\begin{align}
  \nonumber
 (q-q_0)\cdot (p-p_0) \geq 0 \implies &c(q,p)+c(q_0,p_0) \leq c(q,p_0)+c(q_0,p). 
\end{align}
Heuristically, A3w implies when $q-q_0$ ``points in the same direction'' as $p-p_0$ it is cheaper to transport $q$ to $p$ and $q_0$ to $p_0$ than the alternative $q$ to $p_0$ and $q_0$ to $p$. Thus, A3w implies compatibility between directions in the cost-convex geometry and the cost of transport.

 A3w first appeared (in a stronger form) in \cite{MR2188047}. It was weakened in \cite{MR2512204} and a new interpretation given in \cite{MR2506751}. The impetus for the above interpretation is Lemma 2.1 in \cite{MR3419751}. Our result can also be realised by a particular choice of c-convex function in the unpublished preprint \cite{trudiner-wang-preprint}. \\

\textit{Acknowledgements. }  My thanks to Jiakun Liu and Robert McCann for helpful comments and discussion. 
 
\section{Proof of result}
\label{sec:proof-results}
 Let $c \in C^2(\mathbf{R}^n \times \mathbf{R}^n)$ satisfy following the well known conditions\\
\textbf{A1. } For each $x_0,y_0 \in \mathbf{R}^n$ the following mappings are injective
\begin{align*}
  x \mapsto c_y(x,y_0), \quad\text{and} \quad  y \mapsto c_x(x_0,y).
\end{align*}
\textbf{A2. } For each $x_0,y_0 \in \mathbf{R}^n$ we have $\det c_{i,j}(x_0,y_0) \neq 0$.

Here, and throughout, subscripts before a comma denote differentiation with respect to the first variable, subscripts after a comma denote differentiation with respect to the second variable.

By A1 we define on $\mathcal{U}:= \{(x,c_x(x,y)); x,y \in \mathbf{R}^n\}$ a mapping $Y:\mathcal{U}\rightarrow \mathbf{R}^n$ by
\[ c_x(x,Y(x,p)) = p. \]
The A3w condition, usually expressed with fourth derivatives but written here as in \cite{MR4322877}, is the following.\\
\textbf{A3w. } Fix $x$. The function
\[ p \mapsto c_{ij}(x,Y(x,p))\xi_i\xi_j,\]
is concave along line segments orthogonal to $\xi$.

To verify A3w it suffices to verify midpoint concavity, that is  whenever $\xi\cdot \eta = 0$ there holds
\begin{equation}
  \label{eq:mid-point}
   0 \geq [c_{ij}(x,Y(x,p+\eta)) - 2c_{ij}(x,Y(x,p)) + c_{ij}(x,Y(x,p-\eta))]\xi_i\xi_j. 
\end{equation}

Finally, we recall $A \subset \mathbf{R}^n$ is called $c$-convex with respect to $y_0$ provided $c_y(A,y_0)$ is convex.  When A3w is satisfied and $y,y_0 \in \mathbf{R}^n$ are given the section $\{x \in \mathbf{R}^n; c(x,y) > c(x,y_0)\}$ is $c$-convex with respect to $y_0$ \cite{MR4322877}.

Now fix $(x_0,p_0) \in \mathcal{U}$ and $y_0 = Y(x_0,p_0)$. To simplify the proof we assume $x_0,y_0,q_0,p_0 = 0$. Up to an affine transformation (replace $y$ with $\tilde{y}:=-c_{xy}(0,0)y$) we assume $c_{xy}(0,0) = -I$. Note with $q,p$ as defined in \eqref{eq:q}, \eqref{eq:p}, this implies $\frac{\partial q}{\partial x}(0) = I$.  Put
\begin{align*}
  \tilde{c}(x,y) &:= c(x,y) - c(x,0) - c(0,y) + c(0,0),\\
  \overline{c}(q,p) &:= \tilde{c}(x(q),y(p)).                        \end{align*}
 
\begin{theorem}\label{thm:main}
  The A3w condition is satisfied if and only if whenever the above transformations are applied the following implication holds
  \begin{equation}
    \label{eq:imp}
       q \cdot p \geq 0 \implies \overline{c}(q,p) \leq 0. 
  \end{equation}
\end{theorem}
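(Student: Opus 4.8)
The plan is to prove the two implications in Theorem~\ref{thm:main} separately, leaning on the two facts already recorded: that A3w is equivalent to the midpoint inequality~\eqref{eq:mid-point}, and that under A3w the sections $\{x : c(x,y) > c(x,y_0)\}$ are $c$-convex with respect to $y_0$. First I would fix a convenient normalisation. Since adding to $c$ a function of $x$ alone, or of $y$ alone, changes neither $\overline{c}$ nor the validity of A3w, I may assume (beyond the reductions already made) that $c(x,0) = c(0,y) = 0$ for all $x,y$; then $\overline{c}(q,p) = c(x(q),y(p))$, so $\overline{c}(q,0) = \overline{c}(0,p) = 0$ and $c_{ij}(0,0) = 0$, while $x(0) = y(0) = 0$ together with $\tfrac{\partial x}{\partial q}(0) = I$ gives $\overline{c}_q(0,p) = c_x(0,y(p)) = -p$ (and symmetrically $\overline{c}_p(q,0) = -q$). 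One also has $Y(0,p) = y(-p)$, so that the A3w function at $x = 0$ is $p \mapsto c_{ij}(0,y(-p))\xi_i\xi_j$.

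For the forward implication I would fix a target $y$ and put $p = p(y)$; the case $p = 0$ makes~\eqref{eq:imp} trivial, so suppose $p \neq 0$. The section $\{x : c(x,y) > 0\}$ is $c$-convex with respect to $0$, which in the $q$-coordinate says exactly that $K := \{q : \overline{c}(q,p) > 0\}$ is convex. Because $\overline{c}(0,p) = 0$ and $\overline{c}_q(0,p) = -p \neq 0$, the origin lies on $\partial K$ but not in $K$, and near the origin $\partial K$ is the $C^1$ hypersurface $\{\overline{c}(\cdot,p) = 0\}$, with outward normal $p$ at $0$. A convex set lies in every supporting half-space; the (unique) supporting hyperplane of $K$ at the $C^1$ boundary point $0$ is $\{q \cdot p = 0\}$, and writing $K$ near $0$ as the region above a convex function $\phi$ with $\phi(0) = 0$ and $\nabla\phi(0) = 0$ (hence $\phi \geq 0$) excludes points of $K$ lying on $\{q\cdot p = 0\}$. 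Therefore $K \subseteq \{q\cdot p < 0\}$, i.e.\ $q\cdot p \geq 0 \Rightarrow \overline{c}(q,p) \leq 0$.

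For the converse, by~\eqref{eq:mid-point} it is enough to check, at each point and for each pair $\xi,\eta$ with $\xi\cdot\eta = 0$ and $\eta$ small, the relevant midpoint inequality; applying the transformation of the theorem with base point $(x_0,y_0) = (\bar x, Y(\bar x,\bar p))$ carries the midpoint inequality at $(\bar x,\bar p)$ to the one at $x = 0$, $p = 0$ for the transformed cost, which in the above normalisation reads $[c_{ij}(0,y(\eta)) + c_{ij}(0,y(-\eta))]\xi_i\xi_j \leq 0$ (using $c_{ij}(0,0) = 0$ and $Y(0,\pm\eta) = y(\mp\eta)$). To prove this, fix $t > 0$ and apply~\eqref{eq:imp} to the four pairs $(\pm t\xi, \pm\eta)$: every inner product $(\pm t\xi)\cdot(\pm\eta)$ vanishes, so all four numbers $\overline{c}(\pm t\xi, \pm\eta)$ are $\leq 0$, and summing them (with $\overline{c}(\pm t\xi,0) = 0$) yields
\[
  \big[\overline{c}(t\xi,\eta) + \overline{c}(-t\xi,\eta)\big] - 2\big[\overline{c}(t\xi,0)+\overline{c}(-t\xi,0)\big] + \big[\overline{c}(t\xi,-\eta)+\overline{c}(-t\xi,-\eta)\big] \leq 0.
\]
Now for fixed $p$, Taylor expanding $q \mapsto \overline{c}(q,p) = c(x(q),y(p))$ at $q = 0$ in the \emph{smooth} variable $x$ (using $c(0,\cdot) = 0$, $c_x(0,y(p)) = -p$, and $x(\pm t\xi) = \pm t\xi + o(t)$) gives
\[
  \overline{c}(t\xi,p) + \overline{c}(-t\xi,p) = -\,p\cdot\big(x(t\xi)+x(-t\xi)\big) + t^{2}\,c_{ij}(0,y(p))\,\xi_i\xi_j + o(t^{2}).
\]
The first summand is linear in $p$, hence drops out of the second difference in $p$ above; dividing by $t^{2}$ and letting $t \to 0$ delivers $[c_{ij}(0,y(\eta)) + c_{ij}(0,y(-\eta))]\xi_i\xi_j \leq 0$.

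The step I expect to be the main obstacle is this passage to the limit. Since $c$ is only $C^{2}$, the coordinate map $x(\cdot)$ is merely $C^{1}$ and $\overline{c}$ is not twice differentiable in $q$, so A3w cannot simply be read off by differentiating~\eqref{eq:imp} four times; what rescues the argument is that the $o(t)$-sized error produced by the lack of smoothness of $x(\cdot)$ is confined to the term $-\,p\cdot(x(t\xi)+x(-t\xi))$, which is linear in $p$ and so is annihilated by the second difference in $p$, leaving a clean $t^{2}\cdot(\text{A3w expression}) + o(t^{2})$. On the forward side the only comparably delicate point is sharpening the supporting-half-space inclusion $K \subseteq \{q\cdot p \leq 0\}$ to the open half-space; the remainder is bookkeeping with the coordinate changes.
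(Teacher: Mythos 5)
Your proof is correct and follows the same overall strategy as the paper's: the forward direction rests on the convexity (in the $q$-coordinate) of the section $\{\overline{c}(\cdot,p)>0\}$ guaranteed by A3w, and the converse extracts the midpoint inequality \eqref{eq:mid-point} by applying the implication \eqref{eq:imp} along directions with $q\cdot p=0$ and passing to a second-order limit. The one place you genuinely diverge is in how that limit is taken. The paper Taylor-expands $\overline{c}$ in $q$ with a Lagrange remainder $\overline{c}_{ij}(\tau q,p)q_iq_j$ (equation \eqref{eq:exp}), which tacitly requires the coordinate map $q\mapsto x(q)$ to be twice differentiable, i.e.\ slightly more regularity than the stated $c\in C^2$; you instead expand $c$ in the smooth variable $x$, symmetrize in $\pm t\xi$ to confine the error to a term linear in $p$, and let the second difference in $p$ annihilate it. That is a cleaner treatment at the stated regularity, at the cost of a longer computation; the paper's version is shorter and is the standard argument when $c$ is taken $C^3$ or $C^4$, as is usual for A3w. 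In the forward direction your supporting-hyperplane argument is equivalent to the paper's contradiction with three collinear points $-tq$, $tq$, $q$; note also that the sharpening from the closed half-space $\{q\cdot p\leq 0\}$ to the open one follows immediately from the openness of $\{\overline{c}(\cdot,p)>0\}$, so the detour through the convex graph $\phi$ is unnecessary.
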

\begin{proof}
  Observe by a Taylor series 
  \begin{equation}
    \label{eq:exp}
     \overline{c}(q,p) = -(q \cdot p) + \overline{c}_{ij}(\tau q,p)q_iq_j,
  \end{equation}
  for some $\tau \in (0,1)$. First, assume A3w and let $q \cdot p > 0$. By \eqref{eq:exp} we have $\overline{c}(-tq,p) > 0 > \overline{c}(tq,p)$ for $t>0$ sufficiently small. If $\overline{c}(q,p) > 0$ then the $c$-convexity (in our coordinates, convexity) of the section
  \[ \{ q \ ; \ \overline{c}(q,p) > \overline{c}(q,0) = 0 \}, \]
  is violated. By continuity $\overline{c}(q,p) \leq 0$ whenever $q \cdot p \geq 0$.

  In the other direction, take nonzero $q$ with $q \cdot p = 0$ and small $t$. By  \eqref{eq:imp} and \eqref{eq:exp}
  \[ 0 \geq \overline{c}(t q,p)/t^2 = \overline{c}_{ij}(t \tau q,p)q_iq_j. \]
  This inequality also holds with $-p$. Moreover $\overline{c}_{ij}(t \tau q, 0) = 0$. %This is because \overline{c}(q,0)=0
  Thus
  \[ 0 \geq [\overline{c}_{ij}(t \tau q,p) - 2\overline{c}_{ij}(t \tau q , 0) + \overline{c}_{ij}(t \tau q, -p)]q_iq_j.\]
  Sending $t \rightarrow 0$ and returning to our original coordinates we obtain \eqref{eq:mid-point}. 
\end{proof}

\noindent \textbf{Remarks. } (1) On a Riemannian manifold with $c(x,y) = d(x,y)^2$, for $d$ the distance function, Loeper \cite{MR2506751} proved A3w implies nonnegative sectional curvature. Our result expedites his proof.  Let $x_0=y_0 \in M$ and $u,v \in T_{x_0}M$ satisfy $u\cdot v = 0$ with $x = \exp_{x_0}(tu),y=\exp_{x_0}(tv)$. Working in a sufficiently small local coordinate chart our previous proof implies if A3w is satisfied
  \begin{equation}
    \label{eq:1}
     d(x,y)^2 \leq d(x_0,y)^2+d(x_0,x)^2 = 2t.
  \end{equation}
  The sectional curvature in the plane generated by $u,v$ is the $\kappa$ satisfying 
  \begin{equation}
 d(\exp_{x_0}(tu),\exp_{x_0}(tv)) = \sqrt{2}t\big(1-\frac{\kappa}{12}t^2+O(t^3)\big) \text{ as }t \rightarrow 0, \label{eq:2}
\end{equation}
whereby comparison with \eqref{eq:1} proves the result (see \cite[eq. 1]{MR3544918} for \eqref{eq:2}). We note Loeper proved his result using an infinitesimal version of \eqref{eq:1}. \\

\bibliographystyle{plain}
\bibliography{../bibliography.bib}
\end{document}